\DeclareFontFamily{OT1}{rsfs}{}
\DeclareFontShape{OT1}{rsfs}{n}{it}{<-> rsfs10}{}
\DeclareMathAlphabet{\curly}{OT1}{rsfs}{n}{it}
\newtheorem{Thm}{Theorem}[section]
\newtheorem{Lem}[Thm]{Lemma}
\newtheorem{Fac}[Thm]{Fact}
\newtheorem{Cor}[Thm]{Corollary}
\newtheorem{Prop}[Thm]{Proposition}
\newtheorem{``Conj"}[Thm]{``Conjecture"}
\newtheorem{Claim}[Thm]{Claim}
\theoremstyle{remark}
\newtheorem{Rem}[Thm]{Remark}
\newtheorem{Ex}[Thm]{Example}
\theoremstyle{definition}
\newtheorem{Def}[Thm]{Definition}
\newtheorem{Step}{Step}
\newtheorem*{ack}{Acknowledgments}
\newcommand{\SL}{\mathop{\mathrm{SL}}\nolimits}
\newcommand{\iso}{\xrightarrow{   \,\smash{\raisebox{-0.40ex}{\ensuremath{\scriptstyle\simeq}}}\,}}
\begin{document}

\title[Tropical geometric moduli compactification]
{Tropical Geometric Compactification of Moduli, I - $M_g$ case - }
\author{Yuji Odaka}

\dedicatory{To the memory of Kentaro Nagao}

\maketitle
\thispagestyle{empty}

\begin{abstract}
We compactify the classical moduli variety of compact Riemann surfaces 
by attaching moduli of (metrized) \textit{graphs} as boundary. 
The compactifications do \textit{not} admit the structure of varieties and 
patch together to form a big connected moduli space 
in which $\sqcup_{g} M_{g}$ is open dense. 

The metrized graphs, which are often studied as ``tropical curves'', 
are obtained as  
Gromov-Hausdorff collapse by fixing diameters of the hyperbolic metrics of 
the Riemann surfaces. 
This phenomenon can be also seen as an archemidean analogue of 
the tropicalization of Berkovich analytification of $M_{g}$ \cite{ACP}. 

\end{abstract}


\tableofcontents


\section{Introduction}

Let us recall that the moduli space of smooth projective curves admits a 
``canonical" modular compactification 
constructed in 
Deligne-Mumford \cite{DM} first as an algebraic stack 
$\overline{\mathcal{M}_{g}}^{{\rm DM}}$. 
\footnote{Here we put the superscript ``DM'', often omitted in the literaturs, 
to clearly distinguish from 
the compactifications we introduce in this paper. } 
Later on, the moduli stack was 
proved to have a coarse projective variety which is normal and of dimension 
$3g-3$ 
\cite[Especially, III]{KM}, \cite{Gie}, \cite{Mum77}. 

The boundary of the compactification still parametrizes geometric 
objects which are certain nodal curves 
called ``stable curves'' characterized by the GIT stability 
(\cite{Gie}, \cite{Mum77}) or by the K-stability (\cite[4.1]{Od2}, also cf. 
\cite{Mum77}, \cite{Od1}, \cite[\S 7]{LW}). Hence the 
GIT construction (\cite{Mum65}) applies (\cite{Gie}, \cite{Mum77}) while it also fits to 
more general moduli existence conjecture for K-(semi)stable polarized varieties 
(``K-moduli" cf., \cite{Od4}). 

In this paper, we introduce a pair of new 
compactifications of $M_{g}$ 
which are 
\textit{no longer varieties} 
but compact Hausdorff toplogical spaces. 
In the first compactification which we denote as $\overline{M_{g}}^{\rm GH}$, 
the boundaries parametrize the Gromov-Hausdorff limits of 
compact Riemann surfaces with rescaled Poincar\'e (i.e., K\"ahler-Einstein) 
metrics with diameter $1$, 
which we identify 
as certain graphs (Theorem \ref{GH.curves}). 
Hence we would like to call the compactification $\overline{M_{g}}^{\rm GH}$ 
\textit{Gromov-Hausdorff compactification}. 

In the second compactifications of $M_{g}$, we further encode 
some non-negative integer weights on the vertices of the limit graphs. 
We call the metrized graphs with such weights, \textit{weighted metrized graphs}. 
The class of our limits graph is very close to what has been studied as 
``(stable) tropical curves" 
in the literatures (e.g., \cite{BMV,Cap,MZ,CHMR}). 
Our point is that we can construct a refined compactification of $M_g$ than 
$\overline{M_{g}}^{\rm GH}$ by encoding 
the weights. 
The obtained 
compactifications will be called ``{\textit{tropical geometric compactifications}". 
We chose the term because the boundaries 
coincides with the moduli spaces of such tropical curves, which are also studied 
in the literatures (e.g., \cite{BMV,Cap,MZ,CHMR} again), 
while we also avoided the 
term ``\textit{tropical compactification}'' already used by J.~Tevelev 
whose context is very different, namely, 
the problem of  compactifying subvarieties of a torus in a  toric variety (cf.,  \cite{Tev}). 

Let us explain the backgrounds by 
discussing a broader picture for moduli spaces of 
more general varieties. There are two major backgrounds for this work, which we recall now: 

\begin{enumerate}
\item \label{SYZ}
The current extensive approach to the Strominger-Yau-Zaslow mirror symmetry conjecture (\cite{SYZ}). Indeed, 
conjectures of Gross-Wilson \cite[\S6]{GW}, Todorov, 
Kontsevich-Soibelman (\cite{KS}) (cf., e.g., the survey on 
the Gross-Siebert program\cite{Gross}) speculates 
certain families of Calabi-Yau varieties with its Ricci-flat K\"ahler metrics 
collapse to integral affine manifolds with singularities 
in the Gromov-Hausdorff sense, which are recently often regarded as some 
tropical version of Calabi-Yau varieties. 

\item \label{DS}
The algebraicity of \textit {non-collapsed} Gromov-Hausdorff limits of K\"ahler-
Einstein manifolds  (\cite{DS}), its applications to moduli of Fano varieties 
(\cite{Spo,OSS,Od4}), later followed by 
(\cite{SSY,LWX,Od5}). 
\end{enumerate}

There is a similarity between the above two i.e. (\ref{SYZ}) and (\ref{DS})
as the first i.e. (\ref{SYZ}) is in particular showing that the collapsed Gromov-Hausdorff 
limits of K\"ahler-Einstein manifolds are ``tropical \textit{algebraic}" objects 
while the second (\ref{DS}) is showing that the 
non-collapsed limits of K\"ahler-Einstein (Fano) manifolds are \textit{algebro-geometric}  
objects i.e., varieties. 

For moduli spaces of Fano manifolds, 
which we discussed in 
(cf., \cite{DS,OSS,Od4}, \cite{SSY,LWX,Od5}), 
the two kinds of the compactifications 
\begin{enumerate}
    \item[($\alpha$)] the Gromov-Hausdorff metric compactification of 
the moduli space of K\"ahler-Einstein manifolds with the rescaled K\"ahler-Einstein 
metrics with fixed diameters (our $\overline{M_{g}}^{\rm GH}$ and $\overline{M_{g}}^{\rm T}$ to be introduced 
in this paper are on this side) which is closer to the spirit of \eqref{SYZ}
and 
\item[($\beta$)]\label{beta} algebro-geometric compactified moduli of K-stable varieties, 
e.g. $\overline{M_{g}}^{\rm DM}$ as in \eqref{DS}
\end{enumerate}
essentially 
coincide because of the non-collapsing of the metrics. 
However they ``look'' completely different 
in the non-Fano case due to collapse of the K\"ahler-Einstein metrics 
as we show in the present series of papers. 
Indeed, the author believes that the Gromov-Hausdorff compactification while 
fixing the \textit{volume} (rather than the diameter), 
if it exists in an appropriate sense, should be closer in spirit to ($\beta$). 
Nevertheless, as we observe in the case of $M_{g}$ in this paper, we believe that the two series of 
compactifications $(\alpha)$ and $(\beta)$ must be deeply connected in general. 

In the present paper, first we start with the classification of all the possible Gromov-Hausdorff limits 
of the compact Riemann surfaces 
with K\"{a}hler-Einstein metrics of diameters $1$. 
Then using the classification, we construct 
the compactifications and proceed to analyze their structures. 

Our connection between classical algebro-geometric compactifications and tropical moduli spaces 
can be seen as an archimedean 
analogue of the \textit{tropicalization} (\textit{skeleton}) of non-archimedean 
analytification 
of the moduli varieties which is recently studied in \cite{ACP}. We discuss this analogy towards the end of the subsection \ref{GH.collapse.curves}. 

Another interesting point of our compactifications $\overline{M_g}^{\rm GH}$, is that they naturally patch together to form a big (infinite dimensional) \textit{conneted} moduli space 
in which $M_{g}$ are open subsets for \textit{all} $g$. We will call it \textit{infinite join} and denotes it as 
$\overline{M_{\infty}}^{\rm GH}$. 

It would be interesting to pursue this line of research for moduli varieties of other polarized varieties. 
For instance, the author conjectures that the moduli schemes of 
smooth canonical models, again with the rescaled K\"ahler-Einstein metrics of diameters $1$, 
are also precompact for 
Gromov-Hausdorff distance and the corresponding 
collapses will be dual intersection complexes of KSBA semi-log-canonical models 
in certain generalized sense. 
Such speculation is inspired by 
the recent Koll\'ar-Shepherd-Barron-Alexeev  
(KSBA) compactification (cf., e.g., the survey \cite{Kol}) 
and the observation that it is a moduli scheme of K-stable varieties 
(\cite{Od1,Od3}, also \cite{BG}). 
Another interesting case would be those of 
polarized K3 surfaces whose 
Gromov-Hausdorff collapse in the maximally degenerate case have been studied 
in (\cite{GW}, 
\cite{KS}, \cite{GTZ}), which is in 
\cite{OO}. 

Throughout this article, we work over the complex number field $\mathbb{C}$ 
unless otherwise stated. 


\begin{ack}
The first version of this paper appeared in June, 2014 (arXiv:1406.7772) and this is a revised exposition 
of the \textit{former half}, i.e. the $M_g$ case, 
of the original preprint. The companion paper \cite{Od.Ag} is a 
revision of the \textit{latter half}, i.e. the $A_g$ part of arXiv:1406.7772, 
together which included later developments. 

The author would like to thank Radu Laza, Valentino Tosatti, Shouhei Honda, Daisuke Kishimoto, Takeo Nishinou, Takao Yamaguchi for helpful discussions and 
Simon Donaldson, Kei Irie, Hiroshi Iritani, Nariya Kawazumi, Richard Thomas for their helpful comments and 
interests which encouraged me. The author also would like to thank Lionel Lang for teaching 
him his paper \cite{LL} (see Remark \ref{LL}) 
on June of 2015, and thank also the anonymous referee and Yoshiki Oshima 
who helped the author to improve the presentation recently. 

This paper and its companion paper \cite{Od.Ag} 
are dedicated to fifteen years memory of \textit{Kentaro Nagao}. 
Looking back, I can never stop deeply thanking Nagao-san for all the inspirations 
from the 
beginning and the warm friendliness. 
I hope he would be delighted again. 

\end{ack}


\section{Gromov-Hausdorff compactification of $M_{g}$}

\subsection{Precompactness}

For each compact Riemann surface of genus $g(\ge 2)$, 
we put \textit{rescale} of the K\"ahler-Einstein metric with 
the \textit{diameter} $1$. 
\footnote{Readers will find later that this specific constant $1$ does not 
have any specific meaning as we only meant to fix it, 
so we can rather set it to be any fixed positive constant.}
Recall that the 
K\"ahler-Einstein metric is nothing but the famous Poincar\'e metric 
in this case. 
The first point we should clarify is the precompactness of $M_{g}$ with the associated Gromov-Hausdorff distance 
(for its definition we refer to e.g. \cite[Chapter 7]{BBI}) 
on it. We denote the Gromov-Hausdorff distance as $d_{\rm dGH}$. 
Recall that the precompactness of a subset 
of the space of all compact metric spaces means its closure 
with respect to the Gromov-Hausdorff topology is compact. 
During the process of degenerations i.e., 
going to boundary of $M_{g}$, 
the curvature tends to $- \infty$, so we can \textit{not} apply 
the Gromov's precompactness theorem \cite{Grom} 
in our situation. 
Instead we can apply the following theorem of Shioya \cite{Shi} and the Gauss-Bonnet theorem to prove it. 

\begin{Thm}[{\cite[Theorem 1.1]{Shi}}]\label{Shioya.theorem}
For two fixed positive real numbers $D>0$ and $c>0$, consider 
the set $S(D,c)$ of closed $2$-dimensional Riemannian manifolds $(R,d)$ with 
\begin{enumerate}
\item
the diameter ${\rm diam}(d)<D$ 
\item 
and the total absolute curvature $\int_{R} |K_{(R,d)}|{\rm vol}(R)<c$ where $K_{(R,d)}$ 
and ${\rm vol}(R)$ denotes the Gaussian curvature and the volume form with respect to the 
metric $d$. 
\end{enumerate}
Then the set $S(D,c)$ is precompact with respect to the associated Gromov-Hausdorff distance. 
\end{Thm}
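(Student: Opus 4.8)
The plan is to verify Gromov's precompactness criterion: a family of compact metric spaces is precompact for the Gromov--Hausdorff distance precisely when it is \emph{uniformly totally bounded}, i.e.\ the diameters are uniformly bounded and, for every $\varepsilon>0$, there is an integer $N(\varepsilon)$ independent of the member of the family so that each space admits an $\varepsilon$-net with at most $N(\varepsilon)$ points. Hypothesis (i) supplies the uniform diameter bound $D$, so the entire content is to produce a uniform covering number $N(\varepsilon)$ for every $(R,d)\in S(D,c)$. The usual device for bounding covering numbers is a packing estimate against area, which ordinarily rests on Bishop--Gromov volume comparison; here that tool is unavailable because the Gaussian curvature is \emph{not} bounded below (exactly the degenerate regime the paper cares about). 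I would replace volume comparison by the Gauss--Bonnet theorem together with hypothesis (ii) on total absolute curvature.

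First I would extract the topological and curvature bookkeeping. For a closed surface, $\int_R K\,\mathrm{vol}=2\pi\chi(R)=2\pi(2-2g)$, so $2\pi|2-2g|=\bigl|\int_R K\,\mathrm{vol}\bigr|\le \int_R|K|\,\mathrm{vol}<c$ forces the genus $g$, hence the entire topology, to be uniformly bounded in terms of $c$. Writing $K=K^+-K^-$ and $K^+=\tfrac12(K+|K|)$ pointwise, the same identities give $\int_R K^+\,\mathrm{vol}=\tfrac12\bigl(\int_R K\,\mathrm{vol}+\int_R|K|\,\mathrm{vol}\bigr)<c$, so the \emph{total positive curvature} is uniformly bounded. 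The metric control then comes from the first-variation formula for the length $L_p(r)$ of the geodesic circle $\partial B_r(p)$: as long as $B_r(p)$ is an embedded disk (below the cut locus), Gauss--Bonnet applied to $B_r(p)$ gives $L_p'(r)=2\pi-\int_{B_r(p)}K\,\mathrm{vol}$, whence, with $A_p(r)=\mathrm{Area}(B_r(p))$ and $A_p'(r)=L_p(r)$, the two-sided bound $2\pi-c\le L_p'(r)\le 2\pi+c$ and the area estimate $A_p(r)\le(\pi+\tfrac c2)r^2$.

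From here the strategy is a packing argument. Integrating the length bound, and accounting via the bounded topology for the contributions of the cut locus, yields a uniform upper bound $\mathrm{Area}(R)\le C(D,c)$. For the lower bound on the area of small balls, which is what the packing count actually needs, I would argue that a geodesic circle can fail to grow, and a ball can collapse in area, only once $\int_{B_r(p)}K^+\,\mathrm{vol}$ approaches $2\pi$; since the total positive curvature is $<c$, there are at most finitely many points (a number bounded in terms of $c$) near which such concentration occurs, and away from fixed neighbourhoods of them one has a genuine lower bound $\mathrm{Area}(B_{\varepsilon/2}(p))\ge v(\varepsilon)>0$ uniform over the family. Choosing a maximal $\varepsilon$-separated set $\{p_1,\dots,p_N\}$, the balls $B_{\varepsilon/2}(p_i)$ are disjoint, so throwing in the finitely many concentration points by hand gives $N\le C(D,c)/v(\varepsilon)+c_0=:N(\varepsilon)$, and a maximal separated set is automatically an $\varepsilon$-net. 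This establishes uniform total boundedness, hence precompactness.

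The main obstacle is precisely the area estimate of the previous paragraph, and within it the \emph{lower} bound. The clean first-variation identity holds only below the cut locus and for embedded disks, so making the global area bound rigorous requires controlling the geometry past the cut locus and at the finitely many points where curvature concentrates (these become cone points, cusps, or lower-dimensional strata in the limit). This is where the genuine input, namely Shioya's theory of surfaces of bounded integral curvature in the sense of Alexandrov, does the real work; the Riemannian Gauss--Bonnet and first-variation computations above are the heuristic skeleton rather than a complete argument.
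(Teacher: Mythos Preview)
The paper does not prove this theorem at all: it is quoted verbatim from Shioya \cite{Shi} as a black-box input, and the paper's only use of it is the one-line ``First proof'' of Corollary~\ref{Mg.precompact} (apply Gauss--Bonnet to see the total absolute curvature is constant, then invoke Shioya). So there is no proof in the paper to compare against; your proposal is an attempt to reprove Shioya's result itself, which the paper explicitly outsources.

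As for your sketch on its own terms: the skeleton is the right one (Gromov's criterion, Gauss--Bonnet bookkeeping to bound genus and total positive curvature, first variation of circumference, then a packing argument), and you are candid that the genuine difficulty is the uniform \emph{lower} bound on $\mathrm{Area}(B_{\varepsilon/2}(p))$. Your handling of that step is not yet a proof: the assertion that curvature concentration happens near ``at most finitely many points, a number bounded in terms of $c$'' is not justified---nothing prevents $K^{+}$ from being diffusely distributed rather than concentrated at isolated atoms, and the first-variation identity you wrote is only valid below the injectivity radius, which you have no control over. Turning this into an honest argument is precisely the content of Shioya's paper (and the Alexandrov theory of surfaces of bounded integral curvature it relies on), as you acknowledge in your last paragraph. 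So your proposal is a fair heuristic outline of why the theorem should hold, but it is not a self-contained proof, and the paper makes no attempt to give one either.
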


By applying the above theorem, we get the following desired precompactness. 

\begin{Cor}\label{Mg.precompact}
$(M_{g},d_{\rm dGH})$ is precompact. 
\end{Cor}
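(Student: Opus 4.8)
The plan is to realise every point of $M_g$, equipped with its diameter-$1$ K\"ahler--Einstein metric, as a point of one of Shioya's precompact sets $S(D,c)$, with the constants $D$ and $c$ chosen uniformly over the whole moduli space. Since a subset of a totally bounded metric space is itself totally bounded, and precompactness is exactly total boundedness, this will give the corollary immediately. Condition (i) of Theorem \ref{Shioya.theorem} is free: by construction the rescaled metric has diameter exactly $1$, so (i) holds with $D=2$ uniformly and trivially.

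The substantive point is the uniform bound on the total absolute curvature required in condition (ii). Here I would use two facts. First, on a Riemann surface the K\"ahler--Einstein condition forces the Gaussian curvature $K$ to be constant (in real dimension two the Ricci tensor equals $K$ times the metric tensor), hence $K$ has constant sign; consequently $\int_R |K|\,\mathrm{vol}(R) = \bigl|\int_R K\,\mathrm{vol}(R)\bigr|$. Second, the Gauss--Bonnet theorem evaluates the right-hand side topologically,
\[
\int_R K\,\mathrm{vol}(R) = 2\pi\,\chi(R) = 2\pi(2-2g),
\]
so that $\int_R |K|\,\mathrm{vol}(R) = 4\pi|g-1|$. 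This quantity depends only on the genus $g$ and not on the complex structure of $R$; moreover, being the integral of curvature against the area form, it is invariant under the rescaling that fixes the diameter at $1$. Thus (ii) holds uniformly with any fixed $c > 4\pi|g-1|$.

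With $D$ and $c$ so chosen, the map sending $[R]\in M_g$ to the isometry class of $R$ with its diameter-$1$ K\"ahler--Einstein metric lands in $S(D,c)$, and $d_{dGH}$ is by definition the restriction of the Gromov--Hausdorff distance. Theorem \ref{Shioya.theorem} makes $S(D,c)$ precompact, whence $(M_g,d_{dGH})$ is precompact. I do not anticipate a serious obstacle: the one point demanding care is that the bound in (ii) be genuinely independent of the chosen surface, and this is precisely what Gauss--Bonnet supplies once one observes that $K$ is constant, so that the total and total absolute curvatures coincide up to sign.
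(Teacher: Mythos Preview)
Your proof is correct and follows the same approach as the paper: apply Shioya's theorem with the diameter bound given by construction and the total absolute curvature bound supplied by Gauss--Bonnet. The paper's proof is a one-line version of yours, simply noting that the total absolute curvature is constant by Gauss--Bonnet; you have filled in the details (constant-sign curvature so that $\int |K| = |\int K|$, the explicit value $4\pi|g-1|$, and scale-invariance) that the paper leaves implicit.
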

\begin{proof}[First proof]
It directly follows from the Shioya's theorem above (\ref{Shioya.theorem}) since 
our total absolute curvature is constant due to the Gauss-Bonnet theorem. 
\end{proof}

We include another proof of Corollary \ref{Mg.precompact} in the next section, 
in which we also classify all the Gromov-Hausdorff limits. 


\subsection{Gromov-Hausdorff collapse of Riemann surfaces}\label{GH.collapse.curves}

Before stating a theorem, we precisely fix some graph theoretic terminology we use in this paper. 

\begin{Def}
In the present paper, a \textit{metrized (finite) graph} means a finite connected non-directed graph with finite positive lengths attached to 
all edges. It is not necessarily simple, i.e., loops and several edges with the same ends are allowed. 
A \textit{contraction} of a finite graph is a graph which 
can be obtained from the original graph by 
contracting some of its edges. 
\end{Def}

The main result of this section is the following theorem, 
which implies the precompactness of $M_g$ and 
also classify all the possible Gromov-Hausdorff limits of compact hyperbolic surfaces while fixing 
their diameters. 

\begin{Thm}\label{GH.curves}
Let $\{R_{i}\}_{i\in \mathbb{Z}_{>0}}$ be an arbitrary sequence of compact Riemann surfaces of fixed genus $g\geq 2$. 
Suppose $\{(R_{i}, \frac{d_{{\rm KE}}}{\rm diam(R_{i})})\}_{i}$ converges in the Gromov-Hausdorff sense. 
Here $d_{\rm KE}$ denotes the Poincar\'e metric
\footnote{i.e., the hyperbolic metric which is also a K\"ahler-Einstein metric, 
hence the notation} 
on each $R_{i}$ and its diameter is ${\rm diam}(R_{i})$. 

Then the limit is the metric space associated to either 
\begin{enumerate}
\item \label{conv.to.graph} a metrized graph of diameter $1$ or 
\item \label{conv.to.surf} a compact Riemann surface of genus $g$. 
\end{enumerate}

Assume furthermore that the sequence $R_{i}$ converges to 
$[R_{\infty}]\in \overline{M}_{g}^{\rm DM}$ (which can be always be 
achieved by passing to a subsequence 
since $\overline{M}_{g}^{\rm DM}$ is compact). 
Then if $[R_{\infty}]\in M_{g}$ we are in case (\ref{conv.to.surf}) 
and $R_{i}$ converges in the Gromov-Hausdorff sense 
to the metric space underlying $R_{\infty}$; if, on the other hand, 
$[R_{\infty}]\not\in M_{g}$ then we are in case (\ref{conv.to.graph}) 
and the $R_{i}$ converges to the metric 
space underlying a metrized graph whose underlying graph 
is a contraction of the dual graph of $R_{\infty}$. 

Conversely, 
any metrized graph with diameter $1$ 
whose underlying graph is a contraction of some (possibly $0$) 
edges of the dual graph of a
 stable curve of genus $g$, can occur in this way (\ref{conv.to.graph}). 
\end{Thm}

\begin{proof}
We fix a reference compact Riemann surface $S$ and regard the Teichmuller space $T_{g}$ as the 
set of marked compact Riemann surfaces $[\phi\colon S\iso R]$ where we only care 
of the isotopy type of $\phi$. 

First we briefly recall the basic of the pair-of-pants decomposition of $S$, 
which we abbreviate as pants decomposition from now on for short, 
and later we will explain how to apply it. 
$$
S=\bigcup_{0\leq a\leq g-2} P_a
$$
with the associated simple closed boundary geodesics $s_{1},\cdots,s_{3g-3}$. 
Then in turn it naturally induces the corresponding pants decompositions of $R$
$$
R=\bigcup_{0\leq a\leq g-2} P_a(R)
$$
for all elements $[\phi\colon S\iso R]$ of $T_{g}$ since we can take 
simple closed boundary geodesics in the corresponding homology classes. 
The associated simple closed boundary geodesics $\{s_{j}(R)\}_{j}$ of $R$  gives 
the (real analytic) Fenchel-Nielsen coordinates on it 
$$(l_{1},\cdots,l_{3g-3};\theta_{1},\cdots,\theta_{3g-3})
\colon T_{g}\cong \mathbb{R}_{>0}^{3g-3}\times (\mathbb{R}/2\pi \mathbb{Z})
^{3g-3}, $$
where $l_{j}$ is the length of $s_{j}$ and $\theta_{j}$ is corresponding 
twist parameters (cf., \cite{IT}). 
Then the following well-known theorem is due to L.~Bers. 

\begin{Fac}[{\cite{Bers}}]\label{Bers}
Fix a positive integer $g\ge 2$. Then there is a uniform constant $C_g$ 
such that for an arbitrary compact hyperbolic Riemann surface $R$, there is a pant 
decomposition whose corresponding lengths $l_{i}$ of 
simple closed geodesics satisfy $l_{i}<C_g$. 
\end{Fac}
We now argue as follows. 
Suppose we are given a 
sequence $\{R_{i}\}_{i\in \mathbb{Z}_{>0}}$ 
of compact Riemann surfaces of the fixed genus $g\ge 2$, 
as in the statement of Theorem \ref{GH.curves}. 
We replace it by 
its certain subsequence, after several steps as follows. 
Firstly, due to the compactness of the Deligne-Mumford compactification 
$\overline{M_g}^{\rm DM}$, 
we can replace the sequence $\{R_i\}$ by subsequence, if necessary, 
to ensure the existence of a limit of $[R_i]$ inside $\overline{M_g}^{\rm DM}$. 
By applying the Bers' theorem \ref{Bers}, for each $i$, we have a pants 
decomposition 
satisfying the assertion of Theorem \ref{Bers}, i.e., all the lengths of the 
corresponding
 simple closed geodesics are less than a uniform constant $C_g$. 
 For each $R_{i}$, we fix such a pants decomposition from now on. 
On the other hand, note that for each pants decomposition there is 
a corresponding graph whose vertices are (pair of) pants while edges are common geodesics is 
$3$-regular with $2g-2$ vertices. We call this graph 
the combinatorial type of the pant decomposition. See for instance 
\cite[around Definition 1.5]
{Ham11} for the details. 
The number of edges of such a graph 
is $3g-3$ so  obviously there is only 
a finite possibilities for such graphs. Hence, there is only a 
finite possibilities of combinatorial 
type of pants decomposition. 
Therefore, by passing to an appropriate subsequence of $\{R_{i}\}$ again, 
if necessary, we can and do assume the combinatorial type of the pants decompositions we took, which satisfies \ref{Bers}, 
stays fixed. After that, 
by further passing to an appropriate subsequence of $\{R_{i}\}$ again, 
if necessary, we can and do assume moreover that 
$\lim_{i\to \infty} l_{j}(R_{i})=L_{j}$ for some constants 
$L_{j}\in [0,\infty)$ for each $j$. 

The simple geodesics $s_{j}(R_i)$ of $R_{i}$ with $L_{j}=0$ are 
representing the vanishing cycles, i.e., all the cycles that 
shrink to nodal singularities of the corresponding limit 
in the Deligne-Mumford compactification $\overline{M_{g}}^{\rm DM}$. 
We make the following claim, although the author believes this 
has been known to or expected by the experts. 

\begin{Claim}\label{pinching}
There is an index $j$ with $L_{j}=0$, 
if and only if the diameter of 
the \textit{non-rescaled} hyperbolic metrics (i.e., with constant 
Gaussian curvature $-1$) 
tends to $+\infty$. 
This is also equivalent to that the 
limit of the sequence $[R_{i}]$ does not belong to $M_{g}$. 

Otherwise, passing to a subsequence, 
the Gromov-Hausdorff limit $R_{\infty}$ of $\{R_{i}\}_{i}$ 
exists as a compact Riemann surface 
of the same genus $g$. 
\end{Claim}

\begin{proof}[proof of Claim \ref{pinching}]

If all the $L_{j}$ are non-zero, then the compactness of 
$$\{(l_{1},\cdots,l_{3g-3}; 
\theta_{1},\cdots,\theta_{3g-3}) \mid 
L_i-\epsilon \le l_{i}\le C_{g} \text{ for } 1\le \forall i\le 3g-3 \} 
\subset T_{g}$$ for small enough positive real number $\epsilon$ 
straightforwardly implies that the corresponding points 
$[R_i]\in T_{g}$ converge inside $T_{g}$. 
Here, we recall a standard fact that 
the underlying topological surface with the Poincar\'e metric 
to each $[R]\in M_{g}$ (or its rescales with diameters $1$) 
varies contiuously for $R$ with respect to the 
Gromov-Hausdorff topology. If follows, for instance, 
from the interpretation of the family as a 
family of the quotients of the upper half plane by 
continuously deforming Fuchsian subgroup of $PSL(2,\mathbb{R})$. 
(The isomorphic class of the Fuchsian group is not changed, 
as it is the isomorphic class of the 
fundamental group of genus $g$ compact Riemann surface.) 
Or it also follows from the implicit function theorem applied to the 
constancy of the Gaussian curvature. 
Hence, in particular, the diameters 
of the (non-rescaled, original) Poincar\'e metrics of $R_i$ are bounded and 
the Gromov-Hausdorff limit of 
$R_i$ with the rescaled Poincar\'e metric 
is still a compact Riemann surface of genus $g$. 
On the other hand, if $L_{j}=0$ for at least one index $j$, then 
the famous collar theorem \cite{Ke} applies and directly shows that 
for each $i$ there is a cylinder (called ``collar") 
inside $R_i$, including the closed geodesic $l_j$, 
whose diameter tends to $+\infty$. 
We end the proof of the Claim \ref{pinching}. 
\end{proof}

From now on, we assume these equivalent conditions are satisfied i.e., 
$[R_{\infty}]\notin M_{g}$. 
Otherwise, the subsequence converges to a compact Riemann surface (i.e., ``does not degenerate''), 
which corresponds to the case $(ii)$ of 
Theorem \ref{GH.curves}. This is again because of the 
continuity of the surfaces with the 
rescaled Poincar\'e metrics 
parametrized by $M_{g}$ 
with respect to the Gromov-Hausdorff topology. 

Let us denote the diameter of the Poincar\'e (hyperbolic) 
metric $d_{\rm KE}$ of $R_{i}$ as $d_{i}$. 
Then recall that what we are analysing 
is the metric behaviour of $(R_{i},\frac{d_{\rm KE}}{d_{i}})$ and 
we wish to determine its Gromov-Hausdorff limit. 
For that, we analyze the behaviour of the pant 
$(P_{a}(R_{i}),\frac{d_{\rm KE}}{d_{i}})$ in this proof. 
We denote the three boundary geodesics of the pants as $s_{b}(P_a) (b=1,2,3)$, 
or $s_{b}(R_i;P_a) (b=1,2,3)$ for precision, 
which may partially be identified in the Riemann surface $R_i$ i.e., 
e.g. $s_1(R_i;P_{a})=s_2(R_i;P_{a})$ can be possible. 
From now on, whenever the context is clear, 
we sometimes omit $R_i$ and simply denote the 
pants of $R_i$ as $P_a$, not $P_a(R_i)$ and its boundary geodesics 
$s_{b}(P_a) (b=1,2,3)$ rather than 
$s_{b}(R_i;P_a) (b=1,2,3)$. 

Let us recall a standard fact in the Teichmuller theorey (cf., \cite[Chapter 3, 
\S 1.5, \S2]{IT}) which claims 
that the pant $P_{a}(R_{i})$ can be cut and separated into two isometric hyperbolic hexagons $Q_{a}(R_{i})$ and $Q_{a}'(R_{i})$ 
canonically by geodesics which connect different boundary geodesics of the pant 
$P_a(R_{i})$. Let us also recall from \cite[Chapter 3, \S1.5, \S2]{IT} that the 
interior part of the hyperbolic hexagons $Q_a(R_{i})$, with its hyperbolic metric, 
can be regarded as an open subset of a unit disc with the hyperbolic metric $d_{\rm KE}$, 
in a unique way up to the isometry group of the disk i.e., ${\it PGL}(2,\mathbb{R})$. 
We denote the center of the unit disc as $p$. 

Let us call the $3$ boundaries of the hexagon 
which were originally part of the boundaries of the pant $P_{a}$ as 
\textit{boundary geodesics}. 
In any case, the important invariants are the lengths of the $3$ boundary geodesics 
which are half of the lengths of 
the boundary geodesics $s_{b}(R_i;P_{a}) (b=1,2,3)$ of the original pant $P_{a}$. 
Indeed, it is a well-known fact that biholomorphic type of $Q_{a}$ (so also for $P_{a}$) is determined by the 
lengths of the three boundary geodesics (cf., e.g., \cite{IT}). 
We now study 
the Gromov-Hausdorff limit of the hyperbolic hexagon $Q_{a}$ 
while fixing diameters. 
Then, recall from the Claim \ref{pinching}, it follows that 
$d_{\rm KE}(p,s_{b}(R_{i};P_a))\rightarrow +\infty$ for $i\rightarrow \infty$ if and only if 
the corresponding boundary geodesic $s_{b}(R_{i};P_{a})$ shrinks 
i.e., ${\rm length}(s_{b}(R_{i};P_{a}))\to 0$ for $i\to \infty$. 

To each $P_{a}$, 
we associate a tree $\Gamma_{a}$, just as a combinatorial graph, with 
\begin{itemize}
\item the vertex set $V(\Gamma_{a}):=\{v_{a}\}\sqcup 
\{w_{b}  \mid  s_{b}(R_{i};P_{a}) \text{ shrinks} \}$ and 
\item the edge set $E(\Gamma_{a}):=\{\overline{v_{a}w_{b}}\mid s_{b}(R_{i};P_{a}) 
\text{ shrinks} \}$. 
\end{itemize}

Denote the diameter of the hyperbolic hexagon $Q_{a}(R_{i})$ with respect to Poincar\'e metric as $d_{i}(a)$. 
(Recall that the diameter of whole $R_i$ is $d_i$. )
We analyze the asymptotic behaviour of $(R_i, \frac{d_{\rm KE}}{d_i})$ by further 
``decomposing" into that of $Q_a(R_i)$ as above. 

First we fix 
a constant $0<\epsilon\ll 1$ so that the sequence of the half pant 
$\{Q_{a}(R_{i})\}_{i}$ satisfies that the disk $D(p,(1-\epsilon))$ with center $p$ 
and radius $(1-\epsilon)$ 
contains all non-shrinking boundary geodesics of $Q_{a}(R_{i})$. Then 
thinking of the distance between each point in 
$(Q_{a}(R_{i})\cap D(p,(1-\epsilon))$ and $p$, we straightforwardly obtain that 
the diameter of 
$\{(Q_{a}(R_{i})\cap D(p,(1-\epsilon)),d_{\rm KE})\}_{i}$ is bounded above 
by $C_{\epsilon}$. 
On the other hand, the diameters of the collar neighborhoods 
of shrinking boundary geodesics tends to 
$+\infty$ by the collar theorem \cite{Ke}. 
Hence, we have that 
\begin{Claim}[Diverging hyperbolic hexagon]
\label{diverge}
$d_{i}(a)\to \infty$ for $i\to \infty$ if and only if there is an index $b$ 
with ${\rm length}(s_{b}(R_{i};P_{a}))\to 0$ for $i\to \infty$. 
\end{Claim}
\begin{Claim}[Limit of hyperbolic hexagon, I]\label{local.GH.lim}
If we consider the sequence $(Q_{a}(R_{i}),\frac{d_{\rm KE}}{d_{i}(a)})$ for $i=1,2,\cdots$, it 
has the Gromov-Hausdorff limit as a metrized tree $\Gamma_{a}$ in the 
case when ${\rm length}(s_{b}(R_{i};P_{a}))\to 0$ for some $b$ 
when $i\to \infty$. Otherwise its Gromov-Hausdorff limit is still some hyperbolic hexagon. 
\end{Claim}

Next, we compare the diameters of each hyperbolic hexagon $Q_{a}(R_{i})$ and the 
whole Riemann surface $R_{i}$. 

\begin{Claim}
[Diameters comparison]
\label{diam.compare}
\begin{enumerate}
\item \label{i}
For any $i$ there is at least one $Q_{a}(R_{i})$ (or equivalently, 
its index $a$) such that 
\begin{equation}\label{i.ineq}
d_{i}\le 12(g-1)d_{i}(a).
\end{equation}
\item \label{ii}
Suppose that an index $a$ satisfies that 
$d_{i}(a)\to \infty$ 
when $i\to \infty$. Then, 
for any $a$ and large enough $i$, we have 
\begin{equation}\label{ii.ineq}
\frac{d_{i}(a)}{2}\le d_{i}.
\end{equation}
\end{enumerate}
\end{Claim}
\begin{proof}[Proof of Claim \ref{diam.compare}]
The second assertion \eqref{ii} easily follows from the definition. Indeed, 
it can be proven as follows. 
First we can assume $d_{i}(a)$ is the length of a geodesic 
$\gamma\colon [0,1]\to R_{i}$ 
connecting two points $\gamma(0), \gamma(1)$ in the union of the 
boundary geodesics. Then its midpoint $\gamma(\frac{1}{2})$ and 
one of the endpoints, say $\gamma(1)$, 
of the geodesic has the same distance in whole $R_{i}$ i.e., 
after gluing the boundary geodesics. Hence 
\eqref{ii} follows. 

Our first assertion \eqref{i} is proved 
as follows. Take a shortest geodesic 
$\delta\colon [0,1]\to R_{i}$ connecting two points in $R_{i}$ with 
${\rm length}(\delta)={\rm diam}(R_{i})$. 
An elementary observation shows that 
\begin{align}
\label{3}
{\rm length}({\rm Im}(\delta)\cap Q_{a}(R_{i}))\le 3d_{i}(a)\\ 
\label{4}
{\rm length}({\rm Im}(\delta)\cap Q'_{a}(R_{i}))\le 3d_{i}(a). 
\end{align}
Indeed, if we write 
$$I_{a}:=\{t\in [0,1]\mid \delta(t)\in Q_{a}\}=[\alpha_{1},
\alpha_{2}]\sqcup \cdots \sqcup [\alpha_{2m-1},\alpha_{2m}],$$
with $0\le \alpha_{1}\le \alpha_{2}\le \cdots \alpha_{2m}$, 
then note that $\delta(\alpha_{2})$ and $\delta(\alpha_{2m-1})$ are  
connected by a geodesic of length at most 
$d_{i}(a)$, by the definition of $d_{i}(a)$. Since $\delta$ is 
taken to be a shortest geodesic, 
$\sum_{1\le k<m}{\rm length}(\delta([\alpha_{2k-1},\alpha_{2k}]))
\le d_{i}(a)$ which gives our desired estimate \eqref{3}, and also \eqref{4} 
similarly. Hence, by summing up, we obtain 
$d_{i}\le 6\sum_{a}d_{i}(a).$ Since $\#\{a\}=2(g-1)$, we obtain the desired inequality \eqref{ii.ineq}. 
\end{proof}

From the Claims \ref{diverge} 
and \ref{diam.compare} \eqref{i},\eqref{ii} we have that 
$d_{i}\to \infty$ if and only if there is some $P_{a}$ with 
$d_{i}(a)\to \infty$. 
Also it follows from the Claim \ref{diam.compare}, if $P_{a}$ satisfies that 
for some $b$ 
${\rm length}(s_{b}(R_{i};P_{a}))\to 0$ for $i\to \infty$, 
by further passing to a subsequence we can assume that $R_{i}$ satisfies that 
$$\frac{d_{i}(a)}{2}\le d_{i}\le 12(g-1)d_{i}(a),$$ 
for a fixed $a$, say $a=1$. On the other hand,   
$$\frac{d_{i}(a)}{2}\le d_{i}$$ holds for any $a$. 
Hence, combining Claim \ref{local.GH.lim} and Claim \ref{diam.compare}, 
we have that 
\begin{Claim}
[Limit of hyperbolic hexagon, II] 
\label{local.GH.lim2} 
Under our assumption that 
$[R_{\infty}]\notin M_{g}$, 
if we consider the sequence $(Q_{a}(R_{i}),\frac{d_{\rm KE}}{d_{i}})$ for 
$i\to \infty$, it converges in the Gromov-Hausdorff sense to 
either a metrized tree $\Gamma$ or a point. 
\end{Claim}
From the above claim \ref{local.GH.lim2}, 
it follows 
that the global Gromov-Hausdorff limit of $(R_{i},\frac{d_{\rm KE}}{d_{i}})$ is 
a metrized graph which is obtained by gluing all $\Gamma_{a}$ at $w_{b}$'s whose corresponding 
boundary geodesics $s_{j}$ are the same in the whole Riemann surface 
$R_{i}$. The resulting graph is 
either the dual graph of the 
corresponding stable curve $R_{\infty}$ or 
a graph obtained from the dual graph after contracting several edges to 
points. (We simply call such procedure a \textit{contraction} of a graph 
in this paper.) 

\vspace{3mm}

Now let us move on to the proof of the converse direction 
(the last paragraph of the statements of Theorem \ref{GH.curves}). 
That is, starting from an arbitrary finite metrized graph $\Gamma$ of diameter $1$ 
which satisfies the assumption of the last paragraph of Theorem \ref{GH.curves}, 
we wish to prove there is a 
sequence of compact Riemann surfaces $R_{i} (i=1,2,\cdots)$ of genus $g$ such that 
$\Gamma$ is the Gromov-Hausdorff limit of 
$(R_{i},\frac{d_{\rm KE}}{d_{i}})$ i.e., the rescaled 
Poincar\'e metrics of diameter $1$. 

We fix an arbitrary stable curve $R$ whose dual graph contracts to the 
underlying graph of $\Gamma$. Such $R$ exists due to our assumption on $\Gamma$. 
Then take a smooth point in 
each of the irreducible components of $R$ and denote them by $p_{i}$. 
Here the index $i$ corresponds to each irreducible component. 
We take a semi-universal deformation of $R$ as 
$\{R_{\vec{t}}\}_{\vec{t}\in U}$ with 
an open neighborhood $U' \subset \mathbb{C}^{3g-3}$ of $\vec{0}$, 
satisfying $R_{\vec{0}}=R$ and take $p_{i,\vec{t}}$ of $R_{\vec{t}}$ with 
$p_{i,\vec{0}}=p_{i}$ 
which is continuous with respect to $\vec{t}$. 
From here, we use a smaller open neighborhood of $\vec{0}$ denoted by $U\subset U'$ 
with $\bar{U}\subset U'$. Note that there is a discriminant locus $D\subset U$ 
such that $\vec{t}\notin D$ if and only if $R_{\vec{t}}$ is smooth. 
We fix a uniform pants decomposition of $R_{\vec{t}}$ so that 
the nodes $x_{k}$ of $R$ are shrunk dividing geodesics $s_{k}$ 
of the decomposition. 
For each node $x_{k}$ of $R$ connecting the irreducible components 
including $p_{i}$ and $p_{j}$, 
there is a corresponding  shortest geodesic 
$\gamma_{k,\vec{t}}$ 
connecting $p_{i,\vec{t}}$ and $p_{j,\vec{t}}$ if $R_{\vec{t}}$ is smooth 
which intersects with $s_{k}$.

Recall that 
there is a standard submersive holomorphic map 
$\phi=\{\phi_{k}\}_{k}\colon U\to 
\prod_{k}{\rm Kur}(x_{k})$, where ${\rm Kur}(x_{k})$ stands for 
the 
Kuranishi space 
underlying a semi-universal deformation of the node singularity $x_{k}$, 
and $\phi_{k}$ is induced by restricting the deformation of $R$ to 
a neighborhood of each node $x_{k}$. In this case, ${\rm Kur}(x_{k})$ 
can be regarded as an open neighborhood of $0$ in $\mathbb{C}$ and the 
discriminant locus $D$ is the divisor $\cup_{k}\phi_{k}^{-1}(0)$. 
For the proof of the fact that $\phi$ is submersive, i.e., its differential 
$d\phi$ is surjective, see \cite[Proposition 1.5]{DM}. 
Since the distance of $p_{i},p_{j}$ for $i\neq j$ in $R$ with respect to 
the hyperbolic metric is $+\infty$ (i.e., not defined as a real number), 
for a sequence $\{\vec{t}_{m}\}_{m=1,2,\cdots}\subset U\setminus D$, 
$${\rm length}(\gamma_{k,\vec{t}_{m}}; R_{\vec{t}_{m}})\to +\infty$$ 
for $m\to \infty$ if and only if $\phi_{k}(\vec{t}_{m})\to \vec{0}$. 

On the side of $\Gamma$, 
for each node $x_{k}$ of $R$, also an 
edge $\gamma_{k}$ of $\Gamma$ corresponds, which may be 
possibly contracted to a point. If it is contracted, we regard it as an  
edge of length $0$. 

From the above discussions with the surjectivity of $\phi$, 
for large enough positive integers $m\gg 1$, there is $\vec{t_{m}}\in U\setminus D$ 
\begin{align}
\label{edge1}
&{\rm length}(\gamma_{k,\vec{t_{m}}}; R_{\vec{t_{m}}})
=m\cdot {\rm length}(\gamma_{k};\Gamma) \text{ if } \gamma_{k} 
\text{ is 
not contracted in }\Gamma  \\ 
\label{edge2}
&{\rm length}(\gamma_{k,\vec{t_{m}}}; R_{\vec{t_{m}}})
=\sqrt{m} \text{ if } \gamma_{k} \text{ is contracted in }\Gamma. 
\end{align}
Then, the above taken sequence of smooth 
compact Riemann surfaces 
$\{R_{\vec{t}_{m}}\}_{m}$ with the rescaled 
Poincar\'e metric converges to a metrized graph and from 
\eqref{edge1} and \eqref{edge2}, the limit metrized graph coincides with 
$\Gamma$. We complete the proof of the last paragraph of 
Theorem \ref{GH.curves}. 
\end{proof}

\begin{Rem}
An additional comment about the above proof is in order. 
Since for each $\Gamma$, 
the above construction of sequence $R_{\vec{t}_{m}}$ depends on 
our arbitrary choice $R$ (indeed $\lim_{m\to \infty}[R_{\vec{t}_{m}}]
=[R]
\in \overline{M_{g}}^{\rm DM}$ from our construction of 
$R_{\vec{t}_{m}}$), we have actually proved a stronger statement 
than the last paragraph of the statements of Theorem \ref{GH.curves}. 
\end{Rem}

\begin{Rem}
A while after the first version of this paper, we essentially gave another 
(logically independent) more moduli-theoritic proof of 
Theorem \ref{GH.curves} in the sequel 
\cite{Od.Ag} by using \cite{Wol}. 
Precisely speaking, Theorem \ref{GH.curves} follows from 
\cite[\S 3.2.1, Theorem 3.7 and its proof]{Od.Ag} which 
depends on \cite{Wol}. 
\end{Rem}

\begin{Rem}\label{ell.curve}
In the simpler case of $g=1$, i.e., elliptic curves case, 
we also have a similar phenomenon as discussed in the introduction of 
\cite{GW}. 
It can be regarded as the easiest prototypical example 
of the sequel paper \cite{Od.Ag} on the moduli spaces of 
principally polarized abelian varieties and also well-known to the 
experts of the Strominger-Yau-Zaslow mirror symmetry conjetures. 
Thus we give only brief description as an introduction to our sequels \cite{Od.Ag}, \cite{OO}. 

Suppose there is a sequence of elliptic curve $\{\mathbb{C}/
(\mathbb{Z}+\mathbb{Z}\tau_{i})\}_{i}$ 
where $\tau_{i}$ belongs to 
the standard fundamental domain $W$ of the upper half plane $\mathbb{H}$ modulo the modular group $\SL(2,\mathbb{Z})$, that is 
$$
W:=\{\tau\in \mathbb{H}\mid |{\rm Re}(\tau)|\leq 1, |\tau|\geq 1\}. 
$$
If ${\rm Im}(\tau_{i})$ does \textit{not} diverge, then after passing to a subsequence, they converge in the Gromov-Hausdorff sense to an elliptic 
curve. 
If ${\rm Im}(\tau_{i})$ diverges, then the Gromov-Hausdorff limit of a subsequence of $\biggl\{\biggl(R_{i},\dfrac{d_{\rm KE}}{{\rm diam}(d_{\rm KE})}\biggr)\biggr\}_{i=1,2,\cdots}$ is 
$S^{1}(\frac{1}{2\pi})$, the circle of radius $\frac{1}{2\pi}$. On the other hand, 
for a family of elliptic curves over the punctured disk, 
the compactified N\'{e}ron model after suitable base change is 
well-known to be $n$-gon with some $n\in \mathbb{Z}_{>0}$. Thus their dual graphs are topologically $S^{1}$, 
which is homeomorphic to the Gromov-Hausdorff limit discussed above. 

\end{Rem}

\begin{Rem}
For the case of curves with punctures (marked points), 
i.e., elements of $M_{g,n}$ with $n\ge 1$, as the natural hyperbolic metric has 
hyperbolic cusp singularities of \textit{infinite} diameters 
around the punctures, we have not been able to make a 
suitable formulation to study Gromov-Hausdorff collapses. 

 Professor Y-G.Oh kindly pointed out to me that a different but 
similar kind of ``graph-like thin" metrics also appear as 
``\textit{(general) minimal area metric}'' studied by Zwiebach and 
Wolf-Zwiebach (cf., e.g., \cite{Z}, \cite{WZ}) 
for constructing closed string field theory. 
The metrics are expected to be isometric to \textit{flat} 
semi-infinite cylinders around the punctures. 
The graph structure is regarded as a version of 
Feynman diagrams there. 

\end{Rem}

\begin{Rem}
Our Theorem \ref{GH.curves} 
suggests that the conjectures of Gross-Wilson \cite[\S6]{GW}, 
Kontsevich-Soibelman \cite{KS} and Gross-Siebert (cf., \cite{Gross}) 
on the correspondence of Gromov-Hausdorff limit and dual complex of 
degenerating \textit{Calabi-Yau manifolds} may well have an 
analogue in \textit{negative} Ricci curvature K\"ahler-Einstein 
case, i.e., those projective manifolds with ample canonical classes. 
\end{Rem}

Let us trace again the proof of our Theorem \ref{GH.curves} to see 
some analogy with the 
tropicalization of the Berkovich analytification \cite{ACP}. 
The one page arguments below does 
\textit{not} contain any substantially concrete results 
and rather we mean to give a re-interpretation of our Theorem \ref{GH.curves} 
and compare with \cite{ACP}. In our theorem \ref{GH.curves}, 
starting with an arbitrary sequence of compact hyperbolic surfaces, 
we took a nice subsequence which converges to a 
stable curve in the Deligne-Mumford compactification 
and also converging in the Gromov-Hausdorff sense (while fixing the diameter). 
Let us call such sequence of compact hyperbolic surfaces of genus $g(\ge 2)$ 
``strongly convergent sequence''. We denote the set of such strongly convergent sequences of 
compact hyperbolic Riemann surfaces as 
\footnote{Here, $\mathcal{S}$ stands for a sequence. }
$\mathcal{S}\mathcal{M}_{g}$. 

We denote by $S_{g}$ the 
moduli space of the underlying metric spaces of the 
metrized graphs which appear as the Gromov-Hausdorff 
limits of sequences of compact Riemann surfaces of genus $g(\ge2)$, as we showed in 
Theorem \ref{GH.curves}. We simply regard it as a 
Hausdorff topological space with a metric induced by the 
Gromov-Hausdorff distances among the graphs. Note that 
it is also compact by Theorem \ref{GH.curves} and the simple fact that 
$S_{g}$ is closed under the Gromov-Hausdorff convergence. 

Then what we have constructed in the proof of Theorem \ref{GH.curves} 
is the following two kinds of limiting maps 

\begin{equation}\label{map.r}
r\colon \mathcal{S}\mathcal{M}_{g}\rightarrow \overline{M_{g}}^{{\rm DM}} 
\end{equation}
\noindent
which maps $\{R_{i}\}$ to the limit (Deligne-Mumford) stable curve and 

\begin{equation}\label{map.t}
t\colon \mathcal{S}\mathcal{M}_{g}\rightarrow S_{g} 
\end{equation}
\noindent 
which maps $\{R_{i}\}$ to the Gromov-Hausdorff limit. 
Furthermore, we proved that $r$ and $t$ are compatible in the sense that 
the underlying graph of $t(\{R_{i}\})$ 
is a contraction of the dual graph of the limit stable curve $r(\{R_{i}\})$. 

On the other hand, in the recent paper \cite{ACP} by Abramovich-Caporaso-Payne, the following is proved. 

\begin{quotation}
\textit{
Fix an algebraically closed base field $k$ with trivial valuation. 
If we consider the Berkovich analytification $\overline{M_{g}}^{an}$ \cite{Berk1} of the 
Deligne-Mumford compactification $\overline{M_{g}}$, then 
the deformation retraction to the Berkovich skeleton \cite{Berk2} is 
the ``tropicalization" map towards the moduli of tropical curves of genus $g$. 
}
\end{quotation}

\noindent
Note that the Berkovich analytification parametrises stable curves over valuation fields which contains 
$k$ (with trivial valuation) and it can be regarded as (a subspace of) this as an ``algebro-geometric'' analogue of the set of strongly 
convergent sequence of compact Riemann surfaces $\mathcal{S}\mathcal{M}_{g}$. From this viewpoint, their tropicalization (deformation retract) 
is an analogue of our map $t$. 
The analogue of $r$ 
in the Berkovich geometric setting \cite{ACP} is the 
reduction map $\overline{M_{g}}^{an}\rightarrow \overline{M_{g}}^{\rm DM}$. 
Morally speaking, the anti-continuity of the reduction map \cite[(2.4)]{Berk1} 
is analogous to the phenomenon of reversing order of specialization and generization 
when going from the classical to the tropical world. 


\subsection{The construction of $\overline{M_{g}}^{\rm GH}$}

We define our \textit{Gromov-Hausdorff compactification} 
of the moduli space of curves, first set-theoretically as 
$$\overline{M_{g}}^{\rm GH}:=M_{g}\sqcup S_{g}. $$ 
Recall that we have defined $S_{g}$ in the previous subsection 
as the 
moduli space of the underlying metric spaces of the 
metrized graphs which appear as the Gromov-Hausdorff 
limits of sequences of compact Riemann surfaces of genus $g(\ge2)$. 
Then we put a topology on it, whose open basis consists of the following two kinds of 
subsets: 
\begin{enumerate}
\item open subsets of $M_{g}$ 
(with respect to the complex analytic topology) and 
\item the metrics balls with centers are in $S_{g}$. 
\end{enumerate}

What we mean by the metric ball,  
with its center $[G]\in S_{g}\subset \overline{M_{g}}^{\rm GH}$ 
($G$ is a metrized graph) and radius $r\in \mathbb{R}_{>0}$, is simply defined as 
$$
B([G],r):=\{[C]\in \overline{M_{g}}^{\rm GH} \mid d_{\rm GH}([C],[G])<r\}. 
$$
\noindent
The obtained topological space $\overline{M_{g}}^{\rm GH}$ is compact 
due to our Theorem \ref{GH.curves}. 
It also satisfies the Hausdorff separation axiom simply because the 
Gromov-Hausdorff limit as compact metric space is unique as general theory 
(cf., \cite{BBI}). 

Note that the complex conjugate $\iota \in {\rm Aut}(\mathbb{C}/\mathbb{R})$ 
reverses 
the natural orientation of the corresponding Riemann surface, 
which does not change it metric space strucuture. 
A subtle technical point here is that $\overline{M_{g}}^{\rm GH}$ is not 
exactly 
the metric completion with respect to the Gromov-Hausdorff topology, 
of the set 
of compact Riemann surfaces of genus $g$ by regarding the Riemann surfaces just as metric spaces. 
That is because it would discard the complex structures and ignore the effect of $\iota$ above (cf., e.g., \cite{Spo},\cite{OSS}). 

Recall that $S_{g}$ is defined as the moduli spaces of 
the underlying metric spaces of our limit metrized graphs 
as in Theorem \ref{GH.curves}.  
For each finite (metrized) graph $\Gamma$, let us denote the number of $1$-
valent vertices by $v_1(\Gamma)$ and denote the first betti number of $\Gamma$ by  $b_{1}(\Gamma)$. Then, more specifically and concretely, $S_{g}$ can be 
described as follows. 

\begin{Prop}\label{Sg.prop}
The metric spaces parametrized by $S_{g}$ can be characterized by a  
purely topological condition that 
the underlying topological spaces of the metrized graphs 
satisfy $v_1(\Gamma)+b_{1}(\Gamma)\leq g$. 
\end{Prop}

\noindent
Note there is a subtle distinction between the metrized graph and the underlying metric space, 
which is simply a $1$-dimensional CW complex with a metric. 
The reason 
is that the underlying metric space does \textit{not} see the $2$-valent vertices. It is also not 
enough to 
consider metrized graphs without $2$-valent vertices since a circle can not be obtained in that 
way. 

\begin{proof}
From Theorem \ref{GH.curves}, we only need to specify the class of 
dual graphs of stable curves with genus $g$. 

A stable curve $C$ of genus $g$ whose 
irreducible decomposition is $\cup_{i}C_{i}$ with dual graph $\Gamma$ 
satisfies 
\begin{equation}\label{wt.formula}
g=\sum_{i} g(C_{i}^{\nu})+b_1(\Gamma), 
\end{equation}
where $\nu$ denotes the normalization and $b_1$ denotes the first Betti number. 
From the stability condition, for each component $C_i$ which corresponds to a $1$-valent vertex 
of $\Gamma$, $g(C_i^{\nu})\geq 1$. This is essentially the only numerical stability 
condition. Thus we have $g=\sum_{i} g(C_{i}^{\nu})+b_1(\Gamma)\geq v_1(\Gamma)+b_1(\Gamma)$. 
Tracing back the above discussion, it is also easy to see that it is a sufficient condition as well. 
\end{proof}

\begin{Rem}
One remark, which the author hopes to be useful, is that in the above characterisation of metrized graphs which are parametrised in $S_{g}$, 
rather than putting the ``diameter $1$'' condition, it may be easier to impose 
that ``the sum of lengths of edges is $1$'' when we try to concretely describe the  structure of 
our compactifications. 
Note that these two moduli spaces are naturally homeomorphic, simply by rescaling the metrics on our metrized graphs. 
\end{Rem}


\section{Related moduli spaces and comparison}

In this section, we further study $\overline{M_g}^{\rm GH}$ somewhat indirectly by 
comparing with other moduli spaces in literatures, 
and also construct some variants of 
$\overline{M_g}^{\rm GH}$ on the way, including what we call 
tropical geometric compactifications and denote by $\overline{M_g}^{\rm T}$. 

\subsection{Comparison with tropical moduli spaces} 

Recently Brannetti-Melo-Viviani \cite{BMV} 
constructed moduli space $M_{g}^{tr}$ 
of tropical curves and Caporaso \cite{Cap} introduced its pointed versions $M_{g,n}^{trop}$ 
The moduli space $M_{g}^{tr}$ is similar to our boundary 
$S_g$ but there is an essential difference which is if the definition of tropical curves even encode \textit{genus} of each component or not. \cite{BMV} and \cite{Cap} called that data, weights. 

Similarly to what is done in  \cite{CV}, \cite{BMV}, \cite{Cap}, 
the combinatorial type of the underlying graph of a metrized graph 
gives a stratification on $S_{g}$ 
such that each strata is a finite quotient of a simplex. 
A basic property of our moduli space $S_{g}$ is the following. 

\begin{Prop}
The function $S_{g}\ni [\Gamma] \mapsto v_{1}(\Gamma)+b_{1}(\Gamma)$ is a lower semicontinuous function on $S_{g}$ with respect to the 
Gromov-Hausdorff topology which has been previously considered. 
\end{Prop}
\begin{proof}
The assersion follows easily from Theorem \ref{GH.curves} combined with the precompactness (Corollary \ref{Mg.precompact}) 
but let us give a more straightforward combinatorial proof. 

It is enough to see that if we contract one edge $e$, the function 
$v_{1}+b_{1}$ does not increase. 
If the edge $e$ is a loop, then the process decreases $b_{1}$ by $1$ and $v_{1}$ increases at most $1$. 
If the edge $e$ is not a loop, then the contraction does not change 
the homotopy type of the graph so that it keeps $b_{1}$ unchanged, 
and $v_{1}$ does not increase (it may decrease by $1$ or $2$). 

\end{proof}

Note that through the modular interpretations, 
there is a sequence of \textit{canonical} closed embeddings 
\begin{equation}\label{emb.Sg}
S_{g}\hookrightarrow S_{g+1}\hookrightarrow \cdots,  
\end{equation}
\noindent
while other compactifications of moduli of curves and 
the moduli of \textit{weighted} tropical curves by \cite{BMV}, \cite{Cap}, 
\cite{CHMR} 
do \textit{not} have this chain of canonical inclusions. 

Inside the moduli space $M_g^{tr}$ of (weighted) tropical curves 
in the sense of \cite{BMV}, 
let us consider the closed locus $S^{wt}_g$ which 
parametrizes those with the diameter $1$ 
(``\textit{wt}" of $S_g^{wt}$ stands for weights. ) 

\begin{Prop}\label{Sg.wt.Sg}
We have natural morphisms as follows.  
\begin{equation}\label{maps}
\partial M_g^{tr}:= M_g^{tr}\setminus \{ \text{a point with weight } g \} 
\cong S^{wt}_g\times \mathbb{R}_{>0} \twoheadrightarrow S^{wt}_g\twoheadrightarrow S_g. 
\end{equation}
$S_{g}$ has a finite stratification which satisfies that each strata is 
a finite group quotient of an open simplex and $S_{g}$ is 
``purely" $(3g-4)$ dimensional for each $g(\geq 2)$ in the sense that, 
if we denote the union of $(3g-4)$-dimensional strata 
as $S_{g}^{oo}\subset S_{g}$, then it is an open dense subset. 
In addition, the last morphism of \eqref{maps} is a proper map 
such that each fiber over $S_{g}^{\rm oo}$ is finite. 
\end{Prop}

\noindent
\begin{proof}
A tropical curve in the sense of \cite{BMV} has 
finite non-zero diameter unless it is a point, so that we get the first isomorphism. 
Secondly, starting from a tropical curve which is not topologically a point, 
just by forgetting the weights and 
the $2$-valent vertices, we get the underlying metric space of a 
metrized graph. 
It defines the last morphism $S^{wt}_{g}\twoheadrightarrow S_{g}$, 
which we denote as $r$. 
It follows straightforward from the topology on $S^{wt}_{g}$ in \cite{BMV} 
that this morphism is continuous and this is surjective by 
Proposition \ref{Sg.prop}. From the compactness of $S_{g}^{wt}$ and $S_g$, 
it follows automatically that the morphism is proper. Note that  
for any point $p$ in $S_{g}^{wt}$ which has $2$-valent vertices, 
$r^{-1}(r(p))$ is non-finite. 
It is because 
that for each metric space $X$ corresponding to a point in $S_g$, if it is  underlying 
metric space of certain weighted tropical curve (weighted metrized graph) $\Gamma$ parametrized in $S_g^{wt}$, 
once we know the locations of vertices in $X$, there are only finite choices of 
$\Gamma$ which corresponds to the decomposition of $g-b_1(X)$ into non-negative 
integer weights attached to the vertices. 

It is easy to see that $S_{g}$ has a natural finite stratification 
by the homeomorphic class of the underlying graphs. 
Each strata can be seen as the moduli of 
metrized graphs with the same underlying graph, 
with the sum of the length of edges are $1$ by rescaling the metrics. 
Hence it is homeomorphic to 
the quotient of an open simplex with respect to a linear action of 
a finite group, which is the automorphism group of each graph. 
Next we proceed to the proof of the fact that 
$S_{g}$ is purely $3g-4$-dimensional as in the statement of 
Proposition \ref{Sg.wt.Sg}. 
Indeed, for any given (underlying metric space of) 
a metrized graph $\Gamma$ of the diameter $1$ which satisfies 
$v_1(\Gamma)+b_1(\Gamma)<g$, by attaching small 
circles or short edges and rescaling, 
the corresponding point $[\Gamma]\in S_g$ can be 
easily perturbed to a point inside the strata 
with  $v_1+b_1=g$. The strata can be easily checked to have 
dimension $3g-4$, as $3g-3$ is the number of edges inside $\Gamma$ 
following elementary graph theory. This fact is also well known in the algebro-geometric field 
of study of the so-called Mumford curves. 
Thus, the 
union $S_{g}^{oo}$ of $(3g-4)$-dimensional cells form open dense subset. 
For each $p\in S_{g}^{oo}$, 
the $r$-fiber $r^{-1}(r(p))$ is finite since the point in the fiber 
the corresponding tropical curve does not have any positive weights 
on vertices because of the 
formula (\ref{wt.formula}). We complete the proof of Proposition \ref{Sg.wt.Sg}. 
\end{proof}

\subsection{Construction of $\overline{M_{g}}^{\rm T}$}
It is possible to modify 
our construction of $\overline{M_{g}}^{\rm GH}$ to make more 
compactibility 
with the above ``\textit{weighted} tropical moduli spaces" of \cite{BMV}, \cite{Cap}, 
\cite{CHMR}. 
That is, for a collapsing sequence of genus $g$ compact Riemann surfaces 
as in Theorem \ref{GH.curves}, we can 
encode the information of the genera of the irreducible components of 
the limiting stable curves on the limiting graph. More precisely speaking, 
first we consider the set 
$$
\overline{M_{g}}^{\rm T}:=M_{g}\sqcup S_{g}^{wt}, 
$$
on which we put a topology as follows. 
A subset $C$ of $\overline{M_{g}}^{\rm T}$ is \textit{closed} if and only if 
\begin{itemize}
\item $C\cap S_{g}^{wt}$ is closed in $S_{g}^{wt}$ and 
\item any Gromov-Hausdorff collapsed graphs of compact Riemann surfaces 
which are in $C\cap M_{g}$, attached with the genera of 
components of the normalization of the limit stable curve in \cite{DM} sense, 
which we suppose to exist, 
is actually in $C\cap S_{g}^{wt}$. 
\end{itemize}
The compactness, the Hausdorff property of $\overline{M_g}^{\rm T}$, 
and the fact that $M_{g}$ is open and dense inside $\overline{M_g}^{\rm T}$ 
all follow straightforwardly from our 
Theorem \ref{GH.curves} and its proof. 
We would like to call this compactification $\overline{M_g}^{\rm T}$ of $M_{g}$ as 
the \textit{tropical geometric compactification} of $M_{g}$. 

From the construction we have a natural continuous surjective map 
$$\overline{M_{g}}^{\rm T}\twoheadrightarrow \overline{M_{g}}^{\rm GH},$$ 
which restricts to the identity map on the open subset $M_{g}$.


\subsection{Finite join $\overline{M_{\leq g}}^{\rm GH}$ and infinite join $\overline{M_{\infty}}^{\rm GH}$}\label{Mg.join}

An interesting phenomenon is that, as the following definitions show, our Gromov-Hausdorff compactification 
$\overline{M_{g}}^{\rm GH}$ naturally patches together for different $g$ thanks to the 
sequence of the 
canonical inclusions $(\ref{emb.Sg})$ of $S_{g}$. 

\begin{Def}

The \textit{finite joins} of our Gromov-Hausdorff compactifications are 
defined inductively as topological spaces 

$$\overline{M_{\leq 0}}^{\rm GH}:=\overline{M_{0}}^{\rm GH}=\{\text{ Riemann sphere }\mathbb{CP}^{1}\} \text{ (singleton)}, $$

$$\overline{M_{\leq 1}}^{\rm GH}:=\overline{M_{1}}^{\rm GH}:=M_{1}\sqcup \{S^{1}\bigl(\frac{1}{2\pi}\bigr)\}(=\overline{A_{1}}^{\rm T} \text{ in the next section })$$ 
\begin{center} 
(one point compactification) 
\end{center} and for $g\ge 2$ as

$$\overline{M_{\leq g}}^{\rm GH}:=
(\overline{M_{\leq (g-1)}}^{\rm GH}\cup\overline{M_{g}}^{\rm GH})/\sim, $$
where the equivalence relation $\sim$ 
is simply the identification of the closed subset 
$S_{g-1}\subset S_{g}$ and another closed subset 
$S_{g-1}\subset \overline{M_{\leq (g-1)}}^{\rm GH}$. 
From the definition, we have natural inclusion relations 
$$\cdots\overline{M_{\leq (g-1)}}^{\rm GH}\subset 
\overline{M_{\leq g}}^{\rm GH}\cdots.$$
Then we set 
$$
\overline{M_{\infty}}^{\rm GH}:=
\varinjlim_{g} \overline{M_{\leq g}}^{\rm GH}=\cup_{g}\overline{M_{\leq g}}^{\rm GH},  
$$
and call it \textit{infinite join} of our Gromov-Hausdorff compactifications. 
\end{Def}

The boundary of $\overline{M_{\infty}}^{\rm GH}$ by which we mean the natural subset 
$\cup_{g}
(\partial \overline{M_{g}}^{\rm GH}=S_{g})$, should be regarded as a tropical version of 
the space\footnote{They call it ``universal moduli spaces''} 
``$M_{\infty}$'' 
introduced and studied recently 
by Ji-Jost \cite{JJ}. 

Also note that $\overline{M_{\infty}}^{\rm GH}$ 
is connected and all our Gromov-Hausdorff compactification 
$\overline{M_{g}}^{\rm GH}$ is inside this infinite join. 

\subsection{Comparison with the Outer spaces}

There is a classical theory of the \textit{outer space} $X_{n}$ by Culler-Vogtman 
\cite{CV}, 
which is an analogue of the Teichmuller space for metrized graphs. There, the 
analogous discrete group to the mapping class group 
is the outer automorphism group ${\rm Out}(F_{n})$ of the free group $F_{n}$ with 
rank $n$. 
From now on, we use $g$ instead of their $n$ to unify our notation. 

Recall that the quotient $X_{g}/{\rm Out}(F_{g})$ 
parametrizes graphs $\Gamma$ with $b_{1}(\Gamma)=g$ with $v_{1}(\Gamma)=0$. 

We introduce another moduli space of graphs as a subset of $S_{g}$ (with 
the induced topology) 
as
$$
S_{g}^{o}:=\{ \Gamma  \in S_{g} \mid v_{1}(\Gamma)+b_{1}(\Gamma)=g\}\subset S_{g}. 
$$

\noindent 
It is simply the complement of $S_{g-1}\subset S_{g}$ by the definition. 
The following proposition essentially goes back to \cite{CMV}. 
\begin{Prop}
There is a canonical cellular open embedding $X_{g}/{\rm Out(F_{g})}\hookrightarrow S_{g}^{o}(\subset S^{g})$. 
The image of $X_{g}/{\rm Out(F_{g})}$ is open dense in $S_{g}$ (thus so is $S_{g}^{o}$). 
\end{Prop}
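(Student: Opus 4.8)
The plan is to build the map explicitly from the Culler--Vogtmann description and then read off its properties from the combinatorial structure of $S_g$, with a lower semicontinuity statement for $b_1$ (in the style of the previous Proposition) handling openness and an explicit thickening construction handling density. First I would define the map. A point of $X_g/\mathrm{Out}(F_g)$ is an (unmarked) metric graph $\Gamma$ with $b_1(\Gamma)=g$ and every vertex of valence $\ge 3$ (so $v_1(\Gamma)=0$), normalised so that the sum of edge lengths is $1$; by the earlier Remark comparing the ``sum of lengths $=1$'' and ``diameter $=1$'' normalisations, I may instead rescale $\Gamma$ to diameter $1$ without changing the space up to homeomorphism. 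Sending $\Gamma$ to its underlying metric space produces a point of $S_g$ with $v_1+b_1=g$, hence a point of $S_g^o$. The map is cellular: the open simplex of edge-length assignments for a fixed combinatorial type maps, via the piecewise-linear diameter rescaling, homeomorphically onto the corresponding cell of $S_g$, and since all vertices have valence $\ge 3$ no $2$-valent vertices are smoothed, so the combinatorial type is preserved. It is injective because a metric graph with all vertices of valence $\ge 3$ is recovered from its underlying metric space (the branch points are exactly the $\ge 3$-valent vertices, and there are no hidden $2$-valent vertices), and $X_g/\mathrm{Out}(F_g)$ has already been quotiented by graph isomorphisms.

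Next I would identify the image and prove it is open. The image is exactly $\{\,\Gamma\in S_g : b_1(\Gamma)=g\,\}$, and since $b_1\le v_1+b_1\le g$ on $S_g$ this coincides with $\{v_1=0,\ b_1=g\}\subseteq S_g^o$. To see it is open, I would prove, exactly as in the previous Proposition, that $b_1$ is lower semicontinuous: contracting a non-loop edge leaves $b_1$ unchanged, while contracting a loop decreases $b_1$ by $1$; as every Gromov--Hausdorff degeneration on the boundary is realised by such edge contractions, $b_1$ never increases in the limit. Hence $\{b_1=g\}=\{b_1>g-1\}$ is open in $S_g$.

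Then I would upgrade the cellular bijection to a homeomorphism onto its image by matching face relations. Within $\{b_1=g\}$ the only admissible degenerations are contractions of non-loop edges (contracting a loop drops $b_1$ and leaves the image), i.e. precisely the forest collapses that glue the open simplices of outer space; the omitted ``faces at infinity'' of $X_g$ (loop collapses) correspond exactly to passing to the lower strata $\{b_1<g\}$ of $S_g$. Thus the face poset of $\{b_1=g\}$ in $S_g$ agrees with the Culler--Vogtmann simplicial structure, and the diameter rescaling is a homeomorphism on each cell; combined with the openness already established, this yields a cellular open embedding. I expect the main obstacle to be exactly this step: verifying that the Gromov--Hausdorff metric topology on $\{b_1=g\}$ agrees with the simplicial (cone) topology of outer space, since a priori Gromov--Hausdorff convergence near a degenerate configuration need not be governed by the naive combinatorial data. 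I would control this cell by cell and across forest-collapse faces, where the metric varies genuinely continuously with the edge lengths.

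Finally, density. Given any $\Gamma\in S_g$ with $v_1(\Gamma)+b_1(\Gamma)=k\le g$, I would attach a small loop at each $1$-valent vertex (converting a leaf of valence $1$ into a vertex of valence $3$, trading one unit of $v_1$ for one of $b_1$) and then attach $g-k$ further small loops at interior points; the result has $v_1=0$ and $b_1=g$, so it lies in the image, and as all the added edge lengths tend to $0$ its Gromov--Hausdorff limit is $\Gamma$. In particular a leaf-graph with $v_1=1,\ b_1=g-1$ arises by shrinking one such loop back to a tip. Hence $\{b_1=g\}$ is dense in $S_g$, and since it is contained in $S_g^o$, the stratum $S_g^o$ is open dense as well.
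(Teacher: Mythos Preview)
Your argument is correct and follows essentially the same route as the paper: identify the image of $X_g/\mathrm{Out}(F_g)$ as the locus $\{b_1=g,\ v_1=0\}$, deduce openness from the cell structure (your lower-semicontinuity argument for $b_1$ is a repackaging of the paper's remark that these loci are unions of relative interiors of cells), and prove density by attaching small loops at leaves and interior points and letting their lengths tend to zero. You are in fact more careful than the paper in two places---the injectivity step (recovering the combinatorial graph from the underlying metric space when all vertices have valence $\ge 3$) and the concern about matching the Gromov--Hausdorff topology with the simplicial topology---both of which the paper passes over in silence.
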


\begin{proof}
First of all, it follows from the lower semicontinuity of 
the first Betti number of metrized graphs $b_1(\Gamma)$ that 
$X_{g}/{\rm Out}(F_{g})$ is an 
open subset of $S_{g}^{o}$. 
For each $\Gamma\in S_{g}^{o}$ with $v_{1}(\Gamma)+b_{1}(\Gamma)=g$ and 
$0< \epsilon\ll 1$, we define graph(s) $\phi_{\epsilon}(\Gamma)$ as follows. For each leave $vw$ where $v$ is a $1$-valent vertix, 
we put a small loop of length $\epsilon l(vw)$. Doing the same for all edges and rescale the metric on whole graph to make its diameter $1$, 
we get a metrized graph 
which we denote as $\phi_{\epsilon}(\Gamma)$. This construction 
naturally defines a perturbation of elements of $S_{g}^{o}$ to those of 
$X_{g}/{\rm Out}(F_{g})$. 
The fact that all of these are unions of relative interiors of 
the cells with respect to that CW complex structure follow straightforward from the definitions. 

We also need to prove $S_{g}^{o}$ is dense inside $S_{g}$. 
We provide an elementary proof for convenience. Let us analyze the neighborhood of $\Gamma\in S_{g-1}\subset S_{g}$. Starting 
from any such $\Gamma$ with a point $p\in \Gamma$, 
we can similarly consider $\Gamma$'s deformation $\psi_{t}(\Gamma)\in X_{g}/{\rm Out}(F_{g})$ for $t>0$, for example, as follows. 
Set $v_{1}(\Gamma)+b_{1}(\Gamma)=g-d$. Taking a point $p$, 
we define $\psi_{t}(\Gamma)$ as a union of $\Gamma$ and a bouquet i.e., the union of $d$ length $t$ loops which passes through $p$. 
Thus in particular $X_{g}/{\rm Out}(F_{g})$ is open and dense in $S_{g}$ 
and hence so is $S_{g}^{o}$ as well. 
\end{proof}

\subsection{Note added: on L.Lang's ``tropical convergence"}

We end this section with the following notes added, about the relation with 
\cite{LL} 
which was kindly taught by its author L.Lang in June of 2015. I appreciate 
him for informing it. 

\begin{Rem}\label{LL}
L.~Lang defined ``tropical convergence" of compact Riemann surfaces 
to metrized graphs as the convergence of the ratios of the lengths of 
shrinking geodesics, which 
represent vanishing cycles, in his \cite[Definition 1.1]{LL}. 
As also written in \cite[v2, \S1.3]{LL}, 
that notion of convergence is \textit{not} equivalent to ours, i.e. Gromov-Hausdorff convergence 
of hyperbolic metrics. See more details on 
the original paper \cite{LL}. The author also gives more detailed arguments in \cite[\S 3]{Od.Ag}. 
\end{Rem}


\section{Investigating topology}

We would like to make the first step of investigation of the topology of our 
compactifications and their boundaries. 


First, we recall the fact that the moduli space of smooth projective curves has vanishing higher homology groups, 
proved by J.~Harer \cite{Har}. 
His proof shows the existence of a deformation retract via the 
cell complex structure of the Teichmuller space (the so called ``arc complex''). 

\begin{Thm}[{\cite[Theorem 4.1]{Har}}]\label{Harer}
For $g\geq 2$ and $i>4g-5$, we have 
$$H_{i}(M_{g};\mathbb{Q})=0 \text{ and } H^{i}(M_{g};\mathbb{Q})=0. $$ 
So combined with the Poincar\'e-Lefschetz duality for orbifold, 
we get that for $i\leq 2g-2$  
$$
H^{i}_{\rm c}(M_{g};\mathbb{Q})=0 \text{ and } 
H_{i}^{\rm BM}(M_{g};\mathbb{Q})=0, 
$$
\noindent
where $H^{i}_{\rm c}$ denotes the cohomology group with compact supports and $H_{i}^{\rm BM}$ denotes 
the Borel-Moore homology group. 

\end{Thm}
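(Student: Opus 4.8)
The plan is to pass from the orbifold cohomology of $M_g$ to the rational group cohomology of the mapping class group, bound the latter by exhibiting a finite-dimensional equivariant model for Teichm\"uller space, and then obtain the second display purely formally from duality.

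First I would use that the Teichm\"uller space $T_g$ is contractible (homeomorphic to $\mathbb{R}^{6g-6}$) and that the mapping class group $\Gamma_g$ acts on it properly discontinuously with finite stabilizers, the quotient being the orbifold $M_g$. Over $\mathbb{Q}$ the finite isotropy groups are invisible, so $H^i(M_g;\mathbb{Q})\cong H^i(\Gamma_g;\mathbb{Q})$ and likewise for homology; thus the first display reduces to the assertion that $\Gamma_g$ has rational cohomological dimension at most $4g-5$. To see this it suffices to produce a $\Gamma_g$-equivariant deformation retract (``spine'') $W\subset T_g$ of dimension $4g-5$: then $W$ is contractible and $W/\Gamma_g$ is a $(4g-5)$-dimensional complex which is rationally a $K(\Gamma_g,1)$, whence $H^i(\Gamma_g;\mathbb{Q})=H^i(W/\Gamma_g;\mathbb{Q})=0$ for $i>4g-5$, and the same for rational homology.

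The construction of $W$ is the technical heart and the main obstacle. It comes from the combinatorics of systems of disjoint arcs on the surface, organised by Strebel/Jenkins quadratic differentials (equivalently Penner's decorated Teichm\"uller theory), which yield a $\Gamma_g$-invariant ideal cell decomposition whose dual spine has exactly the expected dimension. The cleanest model is the once-marked case, where the arc-complex spine of $T_{g,1}$ has dimension $4g-3$; the closed case, lower by $2$, is consistent with the Birman exact sequence
$$
1\to \pi_1(S_g)\to \Gamma_{g,1}\to \Gamma_g\to 1
$$
since $\pi_1(S_g)$ has cohomological dimension $2$, but it genuinely requires the extra care supplied by Harer's argument. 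Finally, for the second display I would invoke Poincar\'e--Lefschetz duality for the oriented orbifold $M_g$, of real dimension $6g-6$ (orientability is automatic since $T_g$ carries a $\Gamma_g$-invariant complex structure). Duality gives $H^i_c(M_g;\mathbb{Q})\cong H_{6g-6-i}(M_g;\mathbb{Q})$ and $H_i^{BM}(M_g;\mathbb{Q})\cong H^{6g-6-i}(M_g;\mathbb{Q})$, and both right-hand sides sit in degree $6g-6-i>4g-5$ exactly when $i\leq 2g-2$; the first display then forces them to vanish, which is the claim.
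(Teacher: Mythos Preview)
Your proposal is correct and aligns with what the paper indicates: the paper does not give its own proof of this statement but cites Harer and remarks in one line that ``His proof shows the existence of a deformation retract via the cell complex structure of the Teichm\"uller space (`arc complex')''---precisely the spine construction you sketch. Your duality computation for the second display is exactly the argument the paper has in mind (and the index check $6g-6-i>4g-5\Leftrightarrow i\le 2g-2$ is correct), so there is nothing to add.
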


The above theorem \ref{Harer} has the following consequence. 

\begin{Cor}
For $i< 2g-2$, we have 
$$H^{i}(\overline{M_{g}}^{\rm T};\mathbb{Q})=H^{i}(S_{g};\mathbb{Q}),  $$
$$
H_{i}(\overline{M_{g}}^{\rm T};\mathbb{Q})=H_{i}(S_{g};\mathbb{Q}). 
$$
\end{Cor}

\begin{proof}
It follows simply from the exact sequences of compactly supported cohomology groups or the Borel-Moore homology groups. 
\end{proof}


Thus the study of homology and cohomology of our Gromov-Hausdorff 
compactification is 
reduced to that of the boundary for a specific range of degrees. 
Motivated by it, let us study the topology 
\footnote{A while after the appearance of the first version of this paper as 
arXiv:1406.7772, Chan-Galatius-Payne \cite{CGP16} appears which systematically 
studies the topology of the moduli of weighted metrized graphs \textit{
with $n(>0)$-marked points} i.e. the ``log version" 
of $S_g^{wT}$. }
of our boundary $S_{g}$. 
First, we sketch the following cases of small $g$. 
\begin{Ex}
$S_{1}$ is a point which stands for the circle of length $1$. $S_{2}$ is a 
two $2$-simplices (triangles) patched together along one of their edges for each. The latter is 
also homeomorphic to a $2$-simplex again so $S_{1}$ and $S_{2}$ are both contractable. 
\end{Ex}

Since the open dense locus $S_{g}^{o}$ of $S_{g}$ is a rational 
classifying space of ${\rm Out}(F_{g})$ as known to \cite{CV}, 
it has in general highly nontrivial topology. Indeed 
its cohomology is those of ${\rm Out}(F_{g})$ (cf., e.g., \cite{EVHS} for non-vanishing cohomology for $g=5$ case), 
we expect interesting topological structure on $S_{g}$ for large $g$. 

We define $$S_{\infty}:=\varinjlim S_{g}=\cup_{g} S_{g},$$ 
the injective limit 
with respect to the canonical embeddings $S_{g-1}\hookrightarrow S_{g}\hookrightarrow S_{g+1}\cdots$ (cf., (\ref{emb.Sg})). 
Then, while we expect that each $S_{g}$ has highly nontrivial topologies in general, 
we observe the following. 

\begin{Thm}\label{curve.bd.hm}
The topological space $S_{\infty}$ is contractible. 
In particular, for any $k\ge 0$, $\displaystyle \varinjlim_{g} H_{k}(S_{g};\mathbb{Q})=0$. 
\end{Thm}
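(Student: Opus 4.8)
The plan is to prove that $S_{\infty}$ is weakly contractible and then upgrade this to honest contractibility by Whitehead's theorem, using that $S_{\infty}=\varinjlim_{g}S_{g}$ is a CW complex (it is the colimit of the $S_{g}$ along the cellular closed embeddings of \eqref{emb.Sg}). For weak contractibility it suffices to show that for every $g$ the inclusion $\iota_{g}\colon S_{g}\hookrightarrow S_{g+1}$ is null-homotopic inside $S_{g+1}$. Granting this, any continuous $f\colon S^{k}\to S_{\infty}$ has compact image and hence factors through some $S_{g}$ (colimit topology along closed inclusions); composing with the null-homotopy of $\iota_{g}$ exhibits $[f]=0$ in $\pi_{k}(S_{\infty})$. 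Since $S_{\infty}$ is a connected CW complex with all homotopy groups trivial, Whitehead's theorem yields contractibility.

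The null-homotopy of $\iota_{g}$ is a ``grow a loop to infinity'' deformation, which realizes $\iota_{g}$ as factoring through a cone on $S_{g}$ whose apex is the circle $[C_{0}]\in S_{1}\subset S_{g+1}$; this is exactly the join-type structure behind the name \emph{infinite join}. Concretely, for a diameter-$1$ metrized graph $\Gamma$ with $v_{1}(\Gamma)+b_{1}(\Gamma)\le g$, a point $p\in\Gamma$, and $L\in[0,\infty]$, let $F_{L}(\Gamma)$ be the underlying metric space of $\Gamma$ with a loop of length $L$ attached at $p$, rescaled to diameter $1$. Attaching a loop raises $b_{1}$ by $1$ and does not create new $1$-valent vertices, so $F_{L}(\Gamma)\in S_{g+1}$. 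At $L=0$ we recover $\Gamma$, while as $L\to\infty$ the $\Gamma$-part shrinks to the wedge point at rate $O(1/L)$ uniformly in $\Gamma$ (its original diameter is fixed equal to $1$), whence $F_{L}(\Gamma)\to[C_{0}]$ uniformly and independently of $p$. Reparametrising $L=L(t)$, $t\in[0,1]$, yields the homotopy from $\iota_{g}$ to the constant map $[C_{0}]$; the Gromov--Hausdorff continuity needed is of the same nature as in Theorem \ref{GH.curves} and Corollary \ref{Mg.precompact}.

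The main obstacle is the continuity of this homotopy in the graph variable for $t\in(0,1)$, which requires choosing the attaching point $p=p(\Gamma)$ continuously as $\Gamma$ ranges over the moduli space $S_{g}$. No global continuous choice exists --- near the circle the rotation symmetry obstructs any invariant choice of a point on $\Gamma$ --- so the deformation cannot be written by a single naive formula. I would resolve this compatibly with the CW structure: on each open cell (a fixed combinatorial type) one attaches the loop at a distinguished vertex and interpolates, arranging compatibility across the face maps (edge contractions), while the $L\to\infty$ collapse onto $[C_{0}]$, being manifestly independent of $p$, absorbs the remaining ambiguity at the apex. Equivalently, one performs the loop-growing construction on the tautologically pointed model of $S_{g}$ and checks that the resulting map to $S_{g+1}$ descends. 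Either way this cellular bookkeeping is the only nontrivial point.

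Finally, the homology statement follows formally: singular homology commutes with the direct limit along the closed inclusions $S_{g}\hookrightarrow S_{g+1}$, so $\varinjlim_{g}H_{k}(S_{g};\mathbb{Q})=H_{k}(S_{\infty};\mathbb{Q})$, and contractibility of $S_{\infty}$ forces this group to vanish for $k\ge 1$ (and in reduced degree zero, the $k=0$ term being $\mathbb{Q}$).
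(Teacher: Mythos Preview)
Your overall strategy (null-homotopic inclusions $+$ Whitehead) is sound, and is essentially what the paper does as well: it builds compatible cone maps $\phi_g \colon CS_g \to S_\infty$ directly. The gap is in your specific null-homotopy, and neither of your proposed fixes closes it.

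Attaching a single loop at a point $p \in \Gamma$ requires choosing $p$ continuously in $[\Gamma]$. The ``tautologically pointed model'' does \emph{not} descend: for $0 < L < \infty$, attaching a loop of length $L$ at $p$ versus at $p'$ produces non-isometric spaces in general (take $\Gamma$ a path with unequal edge lengths and $p,p'$ the two leaves), so the map from the universal graph over $S_g$ is not constant on fibres. The cell-by-cell choice of a distinguished vertex must be invariant under the graph automorphisms by which each cell is quotiented \emph{and} compatible with all face maps (edge contractions); you assert this can be arranged but do not do it, and for a single vertex it is not clear it can be. Note too that insisting the homotopy land in $S_{g+1}$ forces exactly one loop; allowing it to land in some larger $S_N \subset S_\infty$ would make symmetric, choice-free constructions available---and this is precisely what the paper exploits.

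The paper attaches \emph{two leaves per edge} rather than one loop at a point: to each edge $e=\overline{vw}$ it attaches new leaves $\overline{vv'}$, $\overline{ww'}$ of length $3t\cdot l(e)$. Because the leaf length scales with $l(e)$, this is automatically compatible with edge contractions (the leaves on a shrinking edge shrink with it), so no choice is needed and the map is continuous on $S_g$. After growing the leaves ($t\in[0,\tfrac13]$), the original edges are contracted ($t\in[\tfrac13,\tfrac23]$), landing in the family of ``$\ast$-type'' star trees. This family still depends on $\Gamma$ (the number of leaves varies with the number of edges), but its moduli is a simplex, hence contractible; the final third ($t\in[\tfrac23,1]$) deforms within that simplex to the interval. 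The idea you are missing: rather than collapsing directly to a single target shape, first collapse symmetrically to a \emph{contractible family} of intermediate shapes, and only then contract that family. This is what absorbs the dependence on the combinatorics of $\Gamma$ that your single-loop construction cannot eliminate.
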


\begin{proof}
Consider the cone of $S_{g}$, i.e.,  
$CS_{g}:=(S_{g}\times [0,1])/(S_{g}\times \{1\})$. 
It is enough to construct a series of continuous maps $\{\phi_{g}\colon CS_{g}\rightarrow S_{\infty}\}_{g\ge 2}$ which satisfies 
\begin{enumerate}
\item \label{cond1} $\phi_{g}
(S_{g}\times \{1\})=\{\text{the segment of length}1\}$, 
\item \label{cond2} $\phi_{g}|_{S_{g}\times \{0\}}={\rm id}|_{S_g}$, 
\item \label{cond3} and $\phi_{g+1}|_{CS_{g}}=\phi_{g}$. 
\end{enumerate}
Indeed, from the third condition, they glue together to form a continuous map 
$$\phi_{\infty}\colon CS_{\infty}\to S_{\infty}$$ and this gives a deformation 
retract of $S_{\infty}$ into a point of $S_{\infty}$ which corresponds to 
the segment of length $1$. 

We construct the map $\phi_{g}$ by the following three steps.

\begin{Step}[Adding vertices]\label{Step1}
First we construct $\phi_{g}|_{S_{g}\times [0,\frac{1}{3}]}$. 
For any $(\Gamma, t)\in S_{g}\times [0,\frac{1}{3}]$, 
suppose the set of vertices of $\Gamma$ is 
$V(\Gamma)=\{v_{1},\cdots,v_{m}\}$ and the set of edges 
is $E(\Gamma)=\{e_{1},\cdots,e_{n}\}$. 
We define a new metrized graph 
$\psi_{g}(\Gamma,t)$ for $t\in (0,\frac{1}{3}]$ 
by setting the vertices set 
as $\{v_{1},\cdots,v_{m}\}\sqcup \{w_{1},\cdots,w_{m}\}$ 
and define the set of edges and their lengths as follows. 
The set of edges of $\psi_{g}(\Gamma,t)$ is 
$E(\Gamma)\sqcup \{\overline{v_{i}w_{i}}\mid 1\le i\le m\}$. 
We call an edge in $E(\Gamma)\subset E(\psi_{g}(\Gamma,t))$ 
as \textit{old edge} in this proof, while  
the edges of the form $\overline{v_{i}w_{i}}$ will be called 
\textit{new edges}. 
We put their length $l(v_{i}w_{i})=t$ while we keep the length of 
old edges as the same as $\Gamma$. Then we rescale the length of all edges of 
$\psi_{g}(\Gamma,t) (0< t\le \frac{1}{3})$ to make the diameter $1$ and denote the obtained 
metrized graph as 
$\phi_{g}(\Gamma,t)$. Note that the image of $\phi_{g}|_{S_{g}\times [0,\frac{1}
{3}]}$ is a priori \textit{not} inside $S_{g}$. 
This $\phi_{g}|_{S_{g}\times [0,\frac{1}{3}]}$ is continuous from the construction. 
\end{Step}

\begin{Step}[Contraction of old edges]\label{Step2}
Our next step is the construction of $\phi_{g}|_{S_{g}\times [\frac{1}{3},\frac{2}{3}]}$. 
Roughly speaking, in this step of $t$ increasing from $\frac{1}{3}$ to $\frac{2}
{3}$, we gradually contract the old edges i.e., those which belong to $E(\Gamma)$. 
We make this rigorous as follows. 

First, as in Step \ref{Step1}, 
we construct $\psi_{g}(\Gamma,t)$ for $t\in [\frac{1}{3},\frac{2}{3}]$ 
by setting its vertices set and edges set as 
\begin{align*}
V(\psi_{g}(\Gamma,t))&:=V(\phi_{g}(\Gamma,\frac{1}{3}))\\ 
&=\{v_{1},\cdots,v_{m}\}\sqcup \{w_{1},\cdots,w_{m}\} \text{ for } t\in 
\bigl[\frac{1}{3},\frac{2}{3}\bigr), \\ 
V(\psi_{g}(\Gamma,t))&:=\{v\}
\sqcup \{w_{1},\cdots,w_{m}\}\text{ for }t=\frac{2}{3}, \\ 
E(\psi_{g}(\Gamma,t))&:= E(\phi_{g}(\Gamma,\frac{1}{3}))\\
&=E(\Gamma)\sqcup \{\overline{v_{i}w_{i}}\mid 1\le i\le n\}
\text{ for }t\in \bigl[\frac{1}{3},\frac{2}{3}\bigr), \\ 
E(\psi_{g}(\Gamma,t))&:= 
\{\overline{vw_{i}}\mid 1\le i\le n\}
\text{ for }t=\frac{2}{3}. 
\end{align*}

Then we put the metrics on the edges of $\phi_{g}(\Gamma,t)$ as follows. 
\footnote{
The notation of the following is that the 
length of edge $l$ in a graph $G$ is denoted as ${\rm length}(l,G)$. }
\begin{align*}
{\rm length}(\overline{v_{i}w_{i}}; \phi_{g}(\Gamma,t))&:=\frac{1}{3},\\ 
{\rm length}(\overline{v_{i}v_{j}}; \phi_{g}(\Gamma,t))&:=(2-3t)
{\rm length}(\overline{v_{i}v_{j}}; \Gamma). 
\end{align*}
The above construction of $\psi_{g}(\Gamma,t)$ realizes shrink of old edges in 
$\phi_{g}(\Gamma,\frac{1}{3})$. Then finally we define the metrized graph 
$\phi_{g}(\Gamma,t)$ as rescale of $\psi_{g}(\Gamma,t)$ with the 
diameter $1$. 

From the constrution, the continuity of $\psi_{g}|_{S_{g}\times [\frac{1}{3},
\frac{2}{3}]}$ and $\phi_{g}|_{S_{g}\times [\frac{1}{3},
\frac{2}{3}]}$ are obvious. 
The limit graph $\phi_{g}|_{t=\frac{2}{3}}$ is a tree whose shape looks 
like ``$*$'' 
which we may call ``star-graphs"
Let us call this type of tree ``$*$-type'' with $n(=\# E(\Gamma))$ 
leaves. 

\end{Step}
\begin{Step}[Deforming to the unit segment]
The final step is the construction of $\phi_{g}|_{S_{g}\times [\frac{2}{3},1]}$. 
The moduli space of $*$-type trees $\Gamma$ (as we discussed above in Step \ref{Step2}) with $n$ leaves of diameter $1$ is homeomorphic to the moduli 
space of those whose sum of lengths of edges is $1$, simply by rescaling. And the latter is the simplex 
$$
\Delta_{n}:=\{(x_{1},\cdots,x_{n})\mid 0\leq x_{1}\leq x_{2}\leq \cdots x_{n}\leq 1 , \sum^{n}_{i=1} x_{i}=1\}. 
$$ 
The contractability of the simplex above ensures, or we can directly see that 
there is a deformation retract of each $\Gamma\in \Delta_{n}$ 
to the interval $[0,1]$. This gives $\phi_{g}|_{S_g\times [\frac{2}{3},1]}$. 
\end{Step}
The desired properties \eqref{cond1}, \eqref{cond2}, \eqref{cond3} are 
all straightforward from the construction. 
We complete the proof of Theorem \ref{curve.bd.hm}. 
\end{proof}




\vspace{5mm} \footnotesize \noindent
Contact: {\tt yodaka[at]math.kyoto-u.ac.jp} \\
Department of Mathematics, Kyoto University, Kyoto 606-8285. JAPAN \\

\end{document}